\documentclass[11pt]{article}

\usepackage[T1]{fontenc}
\usepackage{lmodern}
\usepackage[margin=1.02in]{geometry}
\usepackage{microtype}
\usepackage{amsmath,amssymb,amsthm,mathtools}
\usepackage{booktabs}
\usepackage{array}
\usepackage[hidelinks]{hyperref}
\hypersetup{
  pdftitle={Character Fourier Spectra of Circular Units and Twisted Bernoulli Class Components},
  pdfauthor={Peter Chocian},
  pdfkeywords={cyclotomic fields, circular units, generalized Bernoulli numbers, Kummer extensions, Hilbert class fields}
}

\allowdisplaybreaks
\numberwithin{equation}{section}

\newtheorem{theorem}{Theorem}[section]
\newtheorem{proposition}[theorem]{Proposition}
\newtheorem{corollary}[theorem]{Corollary}
\newtheorem{lemma}[theorem]{Lemma}
\theoremstyle{definition}

\theoremstyle{remark}
\newtheorem{remark}[theorem]{Remark}

\newcommand{\Q}{\mathbf Q}
\newcommand{\Z}{\mathbf Z}
\newcommand{\F}{\mathbf F}
\newcommand{\Gal}{\operatorname {Gal}}
\newcommand{\Cl}{\operatorname {Cl}}
\newcommand{\length}{\operatorname {length}}

\newcommand{\cO}{\mathcal O}

\title{Character Fourier Spectra of Circular Units\
and Twisted Bernoulli Class Components}
\author{Peter Chocian}
\date{29 July 2026}

\begin{document}
\maketitle

\begin{abstract}
Let \(\chi\) be a primitive odd Dirichlet character of conductor
\(f\).  Building on the universal projector polynomials \(P_m\)
introduced in the companion quadratic case
\cite{QuadraticCompanion}, we study their character Fourier
transform.  The polynomials are defined by
\[
\sum_{m\geq1}P_m(X)Y^m
 =-\log\bigl(1-X(1-e^{-Y})\bigr)
\]
For \(h_t=\zeta_f^t/(\zeta_f^t-1)\), we evaluate their character
Fourier spectrum.  For every odd \(m\geq1\),
\[
 \sum_{t\in(\Z/f\Z)^\times}\bar\chi(t)P_m(h_t)
 =\tau(\bar\chi)\frac{B_{m,\chi}}{m\,m!}.
\]
After reduction at any prime above \(p\nmid f\), the case
\(m=p-j\) identifies this spectrum, including its exact nonzero
scalar, with the divided generalized Bernoulli value attached to
\(\chi\omega^{-j}\).  Consequently the spectral-zero and
Bernoulli-zero criteria agree over every residue field; no splitting
hypothesis on the character coefficient field is required.

We connect the identity with the local Kummer spectrum of the
circular unit \(1-\zeta_f\zeta_p\), and make the global consequence
explicit in the first non-real case.  For the two primitive quartic
characters modulo \(5\) and primes \(p<500\), \(p\equiv1\pmod{20}\),
there are exactly eleven zero lines.  On each line an integral
character projection of \(1-\zeta_5\zeta_p\) is everywhere locally
unramified, and a finite split-prime Artin calculation proves that
it is not a global \(p\)-th power.  The associated generalized
Bernoulli value has exact valuation one, so the characterwise
abelian Main Conjecture shows that the radical generates the
complete order-\(p\) Hilbert class component.  Six of the eleven
components occur at classically regular primes.  A deterministic
integer-arithmetic program verifies the enumeration, the divided
digits, and every Artin certificate.
\end{abstract}

\medskip
\noindent
\textbf{Keywords.}
Cyclotomic fields, circular units, generalized Bernoulli numbers,
Kummer extensions, Hilbert class fields, Dirichlet characters.

\noindent
\textbf{2020 Mathematics Subject Classification.}
11R18, 11R23, 11R29, 11Y40.

\section{Introduction}

Let \(\chi\) be a primitive odd Dirichlet character of conductor
\(f\).  Generalized Bernoulli values attached to \(\chi\) belong
simultaneously to three classical parts of cyclotomic arithmetic:
special values of Dirichlet \(L\)-functions, Stickelberger
annihilators, and character components of ideal class groups
\cite{Gras1977,Washington}.  Circular units provide the unit-side
counterpart, and the abelian Main Conjecture makes their relation
to class groups characterwise rather than merely numerical
\cite{MazurWiles,Sinnott,Greither}.

The quadratic family \(f=3\), using a reflected norm-one circular
unit, is treated in the companion paper
\cite{QuadraticCompanion}.  There the universal polynomials \(P_m\)
first appeared through the antisymmetric spectrum of that unit.
Here the mechanism is separated from quadratic reflection: we prove
the complete character Fourier transform for every primitive odd
character and then give its first non-real application at \(f=5\).

The purpose of this paper is to isolate an elementary transform
which links these two sides before any class-group computation is
made.  Define
\[
 h_t=\frac{\zeta_f^t}{\zeta_f^t-1}
 \qquad(t\in(\Z/f\Z)^\times).
\]
The local logarithm of \(1-\zeta_f^t\zeta_p\) is governed by one
sequence of polynomials \(P_m\), independent of \(f\), \(p\), and
\(\chi\).  The first main theorem evaluates their finite character
Fourier transform exactly:
\begin{equation}
\label{eq:intro-spectrum}
 \sum_t\bar\chi(t)P_m(h_t)
 =\tau(\bar\chi)\frac{B_{m,\chi}}{m\,m!}
 \qquad(m\text{ odd}).
\end{equation}
The proof is short: the projector generating function is summed
against \(\bar\chi\), a Gauss sum performs the Fourier transform,
and the result is the generalized Bernoulli generating function.
The point is not the separate ingredients, which are classical,
but the exact normalization forced by the local Kummer projector.

The residue-field form of \eqref{eq:intro-spectrum} is the useful
arithmetic statement.  If \(p\nmid f\), \(j\) is even, and
\(k=p-j\), then at every prime \(\mathfrak P\) above \(p\) in the
coefficient field,
\begin{equation}
\label{eq:intro-modp}
 \sum_t\bar\chi(t)P_k(h_t)
 =\frac{\tau(\bar\chi)}{f}(j-1)!\,b_{\chi,j}
 \quad\text{in }\kappa(\mathfrak P).
\end{equation}
Here \(b_{\chi,j}\) is the divided generalized Bernoulli value for
\(\chi\omega^{-j}\).  The factor multiplying it is a unit.  Thus
the local spectral zero and the Bernoulli zero are equivalent over
an arbitrary residue field; no assumption that the character
values already lie in \(\F_p\) is needed for the identity.

Equation \eqref{eq:intro-modp} turns a family of local expansions
into a character spectrometer.  Given \(f\), \(\chi\), and \(p\),
one may:
\begin{enumerate}
\item compute the local Fourier spectrum of
      \(1-\zeta_f\zeta_p\);
\item identify its blind lines with twisted Bernoulli zeros;
\item form an integral character projection of the circular unit;
\item test the projected global Kummer class at one completely
      split prime; and
\item compare the resulting unramified quotient with the full
      character component of the class group.
\end{enumerate}

The second main theorem carries this programme through at conductor
\(5\).  This is the first non-real coefficient family: the two
primitive characters are quartic and conjugate.  For
\(p<500\), \(p\equiv1\pmod{20}\), exactly eleven zero lines occur.
Each produces an explicit complete order-\(p\) Hilbert class
component.  Six rows occur at classically regular primes,
\[
 61\text{ (twice)},\qquad181\text{ (twice)},\qquad241,\qquad281.
\]
At \(p=61\), the two conjugate characters contribute on different
indices.  Hence conjugation does not make the two spectra redundant.

The universal theorem is formulated over coefficient fields.  The
global class-field application is deliberately stated only under
the conductor-five hypotheses used in its proof.  In particular,
we do not infer a uniform unramifiedness theorem at arbitrary
conductor from the Fourier identity alone.

\section{Universal projector polynomials}

For \(m\geq1\), define
\begin{equation}
\label{eq:Pm-definition}
 P_m(X)=
 \frac1{m!}
 \sum_{n=1}^{m}
 (-1)^{m+n}(n-1)!
 \left\{\begin{matrix}m\\n\end{matrix}\right\}X^n,
\end{equation}
where \(\left\{\begin{smallmatrix}m\\n\end{smallmatrix}\right\}\)
is a Stirling number of the second kind.  Thus
\[
 P_1(X)=X,\qquad
 P_2(X)=\frac{X^2-X}{2},\qquad
 P_3(X)=\frac{2X^3-3X^2+X}{6}.
\]

\begin{proposition}[Universal generating identity]
\label{prop:universal-generating}
The polynomials \(P_m\) satisfy
\begin{equation}
\label{eq:universal-generating}
 \boxed{\qquad
 \sum_{m\geq1}P_m(X)Y^m
 =-\log\bigl(1-X(1-e^{-Y})\bigr).
 \qquad}
\end{equation}
\end{proposition}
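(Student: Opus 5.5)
The plan is to expand the right-hand side of \eqref{eq:universal-generating} as a double series and then read off the coefficient of \(Y^m\). We treat everything as a formal power series in \(Y\) with coefficients in \(\Q[X]\). Put \(u=1-e^{-Y}\); then \(u\in Y\,\Q[[Y]]\), so the substitution into
\[
 -\log(1-Xu)=\sum_{n\geq1}\frac{X^nu^n}{n}
\]
is legitimate. For fixed \(n\), only finitely many terms contribute to each power of \(Y\), because \(u^n\) has order \(n\) in \(Y\).

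The key input is the classical exponential generating function for Stirling numbers of the second kind:
\[
 \frac{(e^{Z}-1)^n}{n!}=\sum_{m\geq n}\left\{\begin{matrix}m\\n\end{matrix}\right\}\frac{Z^m}{m!}.
\]
Setting \(Z=-Y\) gives \(e^{-Y}-1=-u\), so \((-u)^n=n!\sum_{m}\left\{\begin{smallmatrix}m\\n\end{smallmatrix}\right\}(-1)^mY^m/m!\). Hence
\[
 u^n=(-1)^n\,n!\sum_{m\geq n}(-1)^m\left\{\begin{matrix}m\\n\end{matrix}\right\}\frac{Y^m}{m!}.
\]
Dividing by \(n\) turns \(n!\) into \((n-1)!\). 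Summing over \(n\) and interchanging the two sums (a finite rearrangement in each \(Y\)-degree) gives
\[
 \sum_{m\geq1}\frac{Y^m}{m!}\sum_{n=1}^{m}(-1)^{m+n}(n-1)!\left\{\begin{matrix}m\\n\end{matrix}\right\}X^n,
\]
which is \(\sum_m P_m(X)Y^m\) by \eqref{eq:Pm-definition}.

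There is no real obstacle here. The only points needing care are the sign bookkeeping from \(Z=-Y\) and the justification of the interchange of sums, which holds because we work formally in \(Y\). As a sanity check, we will compare against the listed values: the \(Y^1\) coefficient is \(X\), and the \(Y^2\) coefficient is \(X^2/2-X/2\), both coming directly from \(u=Y-Y^2/2+\cdots\).
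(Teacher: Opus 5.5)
Your proposal is correct and follows the paper's proof: expand \(-\log(1-Xu)\) with \(u=1-e^{-Y}\), apply the Stirling exponential generating function at \(Z=-Y\) to get \((1-e^{-Y})^n=n!\sum_{m\geq n}(-1)^{m+n}\left\{\begin{smallmatrix}m\\n\end{smallmatrix}\right\}Y^m/m!\), and compare coefficients of \(Y^m\). Your sign bookkeeping and your remark that the interchange of sums is finite in each \(Y\)-degree make explicit what the paper leaves implicit.
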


\begin{proof}
Expand the logarithm first:
\[
 -\log\bigl(1-X(1-e^{-Y})\bigr)
 =\sum_{n\geq1}\frac{X^n}{n}(1-e^{-Y})^n.
\]
The standard exponential generating function for Stirling numbers
gives
\[
 (1-e^{-Y})^n
 =n!\sum_{m\geq n}
 (-1)^{m+n}
 \left\{\begin{matrix}m\\n\end{matrix}\right\}
 \frac{Y^m}{m!}.
\]
Comparing the coefficient of \(Y^m\) gives
\eqref{eq:Pm-definition}.
\end{proof}

\begin{remark}
The same polynomials occur whenever a local factor is written as
\((1-h\pi)\) with \(\pi=1-\zeta_p\), and its logarithm is projected
onto a Teichm\"uller eigenspace.  This is the sense in which the
polynomials are universal: the conductor enters only through the
finite set of values \(h_t\).
\end{remark}

\section{The Fourier--Bernoulli identity over coefficient fields}

Let \(\chi\) be a primitive odd Dirichlet character of conductor
\(f>1\), extended by zero outside \((\Z/f\Z)^\times\).  Put
\[
 L=\Q\bigl(\chi,\zeta_f\bigr),\qquad
 \tau(\bar\chi)=
 \sum_{t\bmod f}\bar\chi(t)\zeta_f^t.
\]
For \(t\in(\Z/f\Z)^\times\), set
\[
 h_t=\frac{\zeta_f^t}{\zeta_f^t-1},
 \qquad
 A_{\chi,m}=
 \sum_{t\in(\Z/f\Z)^\times}
 \bar\chi(t)P_m(h_t).
\]

We normalize generalized Bernoulli numbers by
\begin{equation}
\label{eq:Bernoulli-generating}
 \sum_{a=1}^{f}\chi(a)
 \frac{Te^{aT}}{e^{fT}-1}
 =\sum_{m\geq0}B_{m,\chi}\frac{T^m}{m!}.
\end{equation}

\begin{theorem}[Characteristic-zero Fourier spectrum]
\label{thm:char-zero}
For every odd \(m\geq1\),
\begin{equation}
\label{eq:char-zero}
 \boxed{\qquad
 A_{\chi,m}
 =\tau(\bar\chi)\frac{B_{m,\chi}}{m\,m!}.
 \qquad}
\end{equation}
For even \(m\), both sides of the corresponding parity identity
vanish.
\end{theorem}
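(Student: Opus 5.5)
The plan is to prove the identity for every \(m\geq1\) at once, at the level of generating functions in \(Y\), by summing Proposition~\ref{prop:universal-generating} against \(\bar\chi\). The key simplification comes first. For \(z=\zeta_f^t\) and \(h_t=z/(z-1)\) one has
\[
1-h_t(1-e^{-Y})=\frac{(z-1)-z+ze^{-Y}}{z-1}=\frac{1-ze^{-Y}}{1-z}.
\]
So \(\sum_{m\geq1}P_m(h_t)Y^m=-\log(1-\zeta_f^te^{-Y})+\log(1-\zeta_f^t)\).

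To avoid any branch issue I will not manipulate the logarithms directly. Both sides are formal power series in \(L[[Y]]\) with zero constant term, so it suffices to compare their \(Y\)-derivatives, which are rational in \(e^{Y}\). Differentiating gives
\[
\frac{d}{dY}\sum_{m\geq1}P_m(h_t)Y^m=-\frac{\zeta_f^te^{-Y}}{1-\zeta_f^te^{-Y}}=-\frac{1}{\zeta_f^{-t}e^{Y}-1}.
\]
Summing the geometric series \(\sum_{a=0}^{f-1}(\zeta_f^{-t}e^Y)^a=(e^{fY}-1)/(\zeta_f^{-t}e^Y-1)\), this becomes
\[
-\frac{1}{\zeta_f^{-t}e^{Y}-1}=-\sum_{a=0}^{f-1}\zeta_f^{-ta}\frac{e^{aY}}{e^{fY}-1}.
\]
This is an identity of Laurent series in \(Y\); the pole at \(Y=0\) cancels against the numerator, since \(\zeta_f^{-t}\neq1\).

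Next multiply by \(\bar\chi(t)\) and sum over \(t\). The Fourier step uses the standard Gauss sum identity for a primitive character: \(\sum_t\bar\chi(t)\zeta_f^{-ta}=\chi(-a)\tau(\bar\chi)\) for all \(a\), including \(\gcd(a,f)>1\), where both sides vanish by primitivity. Since \(\chi\) is odd, \(\chi(-a)=-\chi(a)\). Because \(\chi(0)=\chi(f)=0\), the range \(a=0,\dots,f-1\) may be replaced by \(a=1,\dots,f\). Then \eqref{eq:Bernoulli-generating} gives
\[
\sum_{m\geq1}mA_{\chi,m}Y^{m-1}=\tau(\bar\chi)\sum_{a=1}^f\chi(a)\frac{e^{aY}}{e^{fY}-1}=\tau(\bar\chi)\sum_{n\geq0}B_{n,\chi}\frac{Y^{n-1}}{n!}.
\]
Here \(B_{0,\chi}=0\) since \(\chi\) is nontrivial, so no \(Y^{-1}\) term appears. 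Comparing coefficients of \(Y^{m-1}\) yields \(mA_{\chi,m}=\tau(\bar\chi)B_{m,\chi}/m!\) for every \(m\geq1\), which is \eqref{eq:char-zero}.

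For even \(m\), the same identity holds, and \(B_{m,\chi}=0\) because the parity of \(m\) differs from that of the odd character \(\chi\). Hence \(A_{\chi,m}=0\) as well. As a sanity check one could verify this directly via \(t\mapsto-t\), using \(h_{-t}=1-h_t\), but it is not needed.

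The main obstacle is purely one of bookkeeping rather than of ideas. First, the logarithm must be treated as a formal object; differentiating in \(Y\) sidesteps this. Second, the normalization and sign of the Gauss sum must be tracked: whether \(\zeta^{-ta}\) produces \(\chi(-a)\) or \(\chi(a)\), and the sign coming from oddness. An error in either would spoil the exact scalar \(\tau(\bar\chi)/(m\,m!)\). The final check is to verify the case \(m=1\) against \(B_{1,\chi}=\frac1f\sum_a\chi(a)a\).
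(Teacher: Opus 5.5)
Your proof is correct and follows the paper's route. Both arguments sum the generating identity against \(\bar\chi\), rewrite \(1-h_t(1-e^{-Y})\) as \((1-\zeta_f^te^{-Y})/(1-\zeta_f^t)\), differentiate, apply the Gauss sum, and match the generalized Bernoulli generating function. Where the paper expands into the infinite series \(\sum_{n\geq1}\chi(n)e^{-nY}\), meaningful only as functions for \(\operatorname{Re}Y>0\), and evaluates the Bernoulli generating function at \(-Y\), you use the finite sum \(\sum_{a=0}^{f-1}\) together with \(\chi(-a)=-\chi(a)\); this keeps everything formal in \(L((Y))\) and gives \(mA_{\chi,m}=\tau(\bar\chi)B_{m,\chi}/m!\) for all \(m\geq1\) at once.
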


\begin{proof}
Sum \eqref{eq:universal-generating} against \(\bar\chi(t)\).
Since
\[
 1-h_t(1-e^{-Y})
 =\frac{1-\zeta_f^t e^{-Y}}{1-\zeta_f^t},
\]
differentiation gives
\begin{align*}
 \frac{d}{dY}\sum_{m\geq1}A_{\chi,m}Y^m
 &=-\sum_t\bar\chi(t)
   \frac{\zeta_f^t e^{-Y}}{1-\zeta_f^t e^{-Y}}\\
 &=-\sum_{n\geq1}e^{-nY}
   \sum_t\bar\chi(t)\zeta_f^{tn}\\
 &=-\tau(\bar\chi)
   \sum_{n\geq1}\chi(n)e^{-nY}.
\end{align*}
The Gauss-sum identity was used in the last line.  From
\eqref{eq:Bernoulli-generating}, evaluated at \(-Y\),
\[
 \sum_{n\geq1}\chi(n)e^{-nY}
 =\sum_{r\geq1}
 \frac{(-1)^rB_{r,\chi}}{r!}Y^{r-1}.
\]
Because \(\chi\) is odd, only odd \(r\) survive.  Comparing the
coefficient of \(Y^{m-1}\) gives
\[
 mA_{\chi,m}
 =\tau(\bar\chi)\frac{B_{m,\chi}}{m!},
\]
which is \eqref{eq:char-zero}.
\end{proof}

We now give the form that is stable under nonsplit coefficient
fields.  Let \(p\nmid f\) be an odd prime, let
\(\mathfrak P\) be a prime of \(L\) above \(p\), and write
\(\kappa_{\mathfrak P}=\cO_L/\mathfrak P\).  Reduction of
\(\zeta_f\) has exact order \(f\), so all \(h_t\) are defined in
\(\kappa_{\mathfrak P}\).  Let \(\omega\) denote the
Teichm\"uller character modulo \(p\).

For even \(j\), \(2\leq j\leq p-3\), define
\begin{equation}
\label{eq:divided-b}
 b_{\chi,j}
 =\overline{fB_{1,\chi\omega^{-j}}}
 \in\kappa_{\mathfrak P}.
\end{equation}
Equivalently, using Teichm\"uller lifts \(\widetilde a\),
\begin{equation}
\label{eq:divided-sum}
 b_{\chi,j}
 =\overline{
 \frac1p
 \sum_{\substack{1\leq a<fp\\(a,fp)=1}}
 a\,\chi(a)\widetilde a^{-j}}
 \quad\text{in }\kappa_{\mathfrak P}.
\end{equation}

\begin{theorem}[Residue-field Fourier--Bernoulli identity]
\label{thm:residue-field}
Let \(j\) be even with \(2\leq j\leq p-3\), and put
\(k=p-j\).  Then
\begin{equation}
\label{eq:residue-field}
 \boxed{\qquad
 \overline{A_{\chi,k}}
 =\frac{\overline{\tau(\bar\chi)}}{\bar f}
 (j-1)!\,b_{\chi,j}
 \quad\text{in }\kappa_{\mathfrak P}.
 \qquad}
\end{equation}
The scalar \(\overline{\tau(\bar\chi)}/\bar f\) is nonzero.
Consequently
\[
 \overline{A_{\chi,k}}=0
 \quad\Longleftrightarrow\quad
 b_{\chi,j}=0
\]
at every choice of \(\mathfrak P\).
\end{theorem}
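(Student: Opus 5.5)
The plan is to reduce the characteristic-zero identity of Theorem~\ref{thm:char-zero} modulo \(\mathfrak P\), using three inputs: a Kummer-type congruence for generalized Bernoulli numbers, Wilson's theorem, and \(|\tau(\bar\chi)|^2=f\). Since \(p\) is odd and \(j\) is even, \(k=p-j\) is odd with \(3\le k\le p-2\). Theorem~\ref{thm:char-zero} then gives
\[
 A_{\chi,k}=\tau(\bar\chi)\,\frac{1}{k!}\cdot\frac{B_{k,\chi}}{k}
\]
in \(L\). I would show that each factor on the right is \(\mathfrak P\)-integral and identify its reduction.

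\emph{Step 1: the factorial.} Because \(k<p\), \(k!\) is a \(p\)-adic unit. Split \((p-1)!\) as \((p-j)!\) times \(\prod_{i=1}^{j-1}(p-i)\equiv(-1)^{j-1}(j-1)!\). Wilson's theorem then gives \((p-j)!\,(j-1)!\,(-1)^{j-1}\equiv-1\). Since \(j\) is even, this means \(1/k!\equiv (j-1)!\) modulo \(p\).

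\emph{Step 2: the Bernoulli factor.} This is the main point. I would invoke the generalized Kummer congruence coming from the Kubota--Leopoldt \(p\)-adic \(L\)-function, in the form of Washington's Corollary~5.13 or the continuity of \(L_p(s,\psi)\). For \(\psi=\chi\omega^{-j}\), the function
\[
 n\mapsto -\bigl(1-\psi\omega^{-n}(p)p^{n-1}\bigr)\frac{B_{n,\psi\omega^{-n}}}{n}
\]
interpolates \(L_p(1-n,\psi)\). These values are \(p\)-integral and congruent modulo \(\mathfrak P\) for all \(n\ge1\), provided \(\psi\) is not of the form "trivial on the prime-to-\(p\) part". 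That proviso holds because \(\chi\) has conductor \(f>1\), so \(L_p(s,\psi)\) is Iwasawa-analytic with integral coefficients.

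I would compare \(n=k\) with \(n=1\). Note that \(\omega^{-j-k}=\omega^{-p}=\omega^{-1}\), so \(\psi\omega^{-k}=\chi\omega^{-1}\). The same parameter \(s\) at \(n=1\) corresponds to the pair \(\chi\omega^{-j}\). More cleanly, apply the congruence to the branch \(\chi\omega^{1-k}=\chi\omega^{-j}\cdot\omega^{p-1}\), so that the character at \(n=k\) is \(\chi\) and at \(n=1\) is \(\chi\omega^{1-k}=\chi\omega^{j+1-p}=\chi\omega^{-j}\), with the exponents read modulo \(p-1\).

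The Euler factor at \(n=k\) is \(1-\chi(p)p^{k-1}\equiv1\), since \(k\ge3\) and \(p\nmid f\). At \(n=1\), the character \(\chi\omega^{-j}\) has conductor divisible by \(p\), because \(j\not\equiv0\) modulo \(p-1\); so its value at \(p\) is \(0\) and that Euler factor is also \(1\). The outcome is
\[
 \frac{B_{k,\chi}}{k}\equiv B_{1,\chi\omega^{-j}}=\frac{1}{f}\,fB_{1,\chi\omega^{-j}} \pmod{\mathfrak P}.
\]
The right-hand side reduces to \(b_{\chi,j}/\bar f\). This also shows that the representative \(fB_{1,\chi\omega^{-j}}\) in \eqref{eq:divided-b} is \(\mathfrak P\)-integral, with \(f\) invertible since \(p\nmid f\).

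\emph{Step 3: assemble and check the scalar.} Multiplying the three reductions gives \eqref{eq:residue-field}. For the nonvanishing of the scalar, note that \(\tau(\bar\chi)\overline{\tau(\bar\chi)}^{\mathrm{cx}}=f\) for primitive \(\bar\chi\), where the second factor is the complex conjugate. Since \(p\nmid f\), \(\tau(\bar\chi)\) is a \(\mathfrak P\)-unit, and so is \(\bar f\). The equivalence \(\overline{A_{\chi,k}}=0\Leftrightarrow b_{\chi,j}=0\) follows immediately. Because nothing in the argument used \(\kappa_{\mathfrak P}=\F_p\), it holds at every \(\mathfrak P\).

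I expect Step 2 to be the only delicate step: pinning down the exact form and hypotheses of the Kummer congruence for a twisted character with values in a possibly nonsplit coefficient field. As a fallback, I would prove it directly. Write \(B_{k,\chi}=F^{k-1}\sum_{a=1}^{F}\chi(a)B_k(a/F)\) with \(F=fp\), expand \(B_k(x)\) modulo \(p\), and use \(a^{k-1}=a^{p-1-j}\equiv\widetilde a^{-j}\). Summing over units recovers \eqref{eq:divided-sum}, with the terms \(p\mid a\) contributing the Euler factor.
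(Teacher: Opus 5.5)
Your proof is correct and is essentially the paper's: you reduce Theorem~\ref{thm:char-zero} modulo \(\mathfrak P\) using the generalized Kummer congruence \(B_{k,\chi}/k\equiv B_{1,\chi\omega^{-j}}\), Wilson's congruence \((p-j)!\,(j-1)!\equiv1\), and the fact that \(\tau(\bar\chi)\) divides \(f\). Two small repairs are needed: in Step~2 the branch is \(\chi\omega^{k}\), which gives \(\chi\) at \(n=k\) and \(\chi\omega^{k-1}=\chi\omega^{-j}\) at \(n=1\) (your \(\chi\omega^{1-k}\) is actually \(\chi\omega^{j}\)); and your fallback as sketched does not work, because \(a^{k-1}\equiv\widetilde a^{-j}\) holds only modulo \(p\), so after division by \(F=fp\) the discarded error is of the same order as the main term (dropping it gives \(B_{1,\chi\omega^{-j}}/k\), off by the factor \(1/k\equiv-1/j\)), and an elementary proof would need a finer device such as Voronoi's congruence.
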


\begin{proof}
The generalized Kummer congruence gives
\[
 \frac{B_{k,\chi}}{k}
 \equiv B_{1,\chi\omega^{-j}}
 =\frac{b_{\chi,j}}f
 \pmod{\mathfrak P},
\]
because \(k-1\equiv-j\pmod{p-1}\).  Wilson's congruence and the
parity of \(j\) give
\[
 (p-j)!(j-1)!\equiv1\pmod p.
\]
Substitution in Theorem~\ref{thm:char-zero} proves
\eqref{eq:residue-field}.  Finally,
\[
 \tau(\chi)\tau(\bar\chi)=\chi(-1)f=-f,
\]
so both Gauss sums are units at every prime above \(p\nmid f\).
\end{proof}

\begin{remark}
When \(p\) splits in the coefficient field and the selected
\(f\)-th roots of unity lie in \(\F_p\),
Theorem~\ref{thm:residue-field} is an identity in \(\F_p\).  In
general it lives in a finite extension.  At each prime
\(\mathfrak P\), the spectral and Bernoulli zero criteria still
coincide.  Different primes above \(p\) may represent conjugate
coefficient embeddings and need not select the same index.
\end{remark}

\section{Cyclotomic interpretation of the spectrum}

Let
\[
 K_{f,p}=\Q(\zeta_f,\zeta_p),\qquad
 \eta=\zeta_p,\qquad
 u_{f,p}=1-\zeta_f\eta,
\]
and choose a prime above \(p\).  If \(\xi\) denotes the reduction
of \(\zeta_f\) and \(\pi=1-\eta\), then
\begin{equation}
\label{eq:local-factor}
 1-\xi^t\eta
 =(1-\xi^t)(1-h_t\pi),
 \qquad
 h_t=\frac{\xi^t}{\xi^t-1}.
\end{equation}

\begin{proposition}[Local spectral meaning]
\label{prop:local-spectrum}
Let \(k=p-j\) be as in Theorem~\ref{thm:residue-field}.  Up to a
nonzero normalization scalar, the first possible local coordinate
of the \(\chi\omega^k\)-projection of \(u_{f,p}\) is
\[
 A_{\chi,k}
 =\sum_{t\in(\Z/f\Z)^\times}
 \bar\chi(t)P_k(h_t).
\]
Thus the local coordinate vanishes if and only if
\(b_{\chi,j}=0\).
\end{proposition}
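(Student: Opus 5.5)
The plan is to make precise what ``first possible local coordinate of the \(\chi\omega^k\)-projection'' means, and then to reduce the statement to the generating identity of Proposition~\ref{prop:universal-generating} together with Theorem~\ref{thm:residue-field}.

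\textbf{Setting up the local logarithm.} Work in the completion of \(K_{f,p}\) at the chosen prime, and use the factorization \eqref{eq:local-factor}. The first factor \(1-\xi^t\) is a unit independent of \(\eta\). The \(\Gal(\Q(\zeta_p)/\Q)\)-action on \(\eta\) induces the \(\omega\)-eigenspace decomposition, and the factor \(1-\xi^t\) lies in the trivial \(\omega\)-component. It is therefore killed by any projection with \(k\not\equiv0\pmod{p-1}\), which holds here since \(k=p-j\) with \(2\le j\le p-3\). So only \(\log(1-h_t\pi)\) matters.

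\textbf{Rewriting in the variable \(Y\).} Put \(\eta=e^{-Y}\) formally, i.e.\ \(\pi=1-e^{-Y}\), with \(Y\) the Artin--Hasse/Dwork-type uniformizer satisfying \(\sigma_a(Y)\approx aY\) on the \(\omega\)-eigenspaces up to terms of higher degree. By Proposition~\ref{prop:universal-generating},
\[
-\log(1-h_t\pi)=\sum_{m\ge1}P_m(h_t)Y^m .
\]
The \(\omega^k\)-isotypic part in the \(\eta\)-direction then picks out the monomial \(Y^k\), together with congruent degrees \(k+(p-1)r\), which have strictly higher valuation. Hence the leading, or ``first possible'', coordinate of the \(\omega^k\)-piece of \(\log(1-\xi^t\eta)\) is \(P_k(h_t)\), up to a fixed normalization constant coming from the choice of \(Y\) and the projector idempotent. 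That constant is a power of \(p\) times a unit and is independent of \(t\).

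Next, apply the \(\chi\)-projector in the \(\zeta_f\)-direction. Summing the conjugates \(\sigma_t(u_{f,p})\) against \(\bar\chi(t)\) replaces \(\xi\) by \(\xi^t\), so the coordinate becomes \(\sum_t\bar\chi(t)P_k(h_t)=A_{\chi,k}\) times that scalar. The vanishing criterion then follows immediately from Theorem~\ref{thm:residue-field}.

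\textbf{Main obstacle.} The hard step is justifying the valuation separation: that the \(Y^k\) coefficient really is the lowest-order term in its eigenspace, and that the substitution \(\pi=1-e^{-Y}\) is compatible with the Galois action on eigenspaces. My first attempt would use the Coleman/Dwork choice \(\eta=\exp(-Y)\) with \(Y^{p-1}=-p\), so that \(\sigma_a Y=\omega(a)Y\) exactly. With that choice the \(\omega^k\)-projection is literally the sum of the coefficients of \(Y^{k+(p-1)r}\), and the \(r=0\) term dominates \(p\)-adically.
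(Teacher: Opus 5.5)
Your outline is the paper's proof. The nontrivial \(\omega^k\)-projection kills \(1-\xi^t\), the \(\omega^k\)-coordinate of \(1-h_t\pi\) is read off from Proposition~\ref{prop:universal-generating}, you sum against \(\bar\chi(t)\), and Theorem~\ref{thm:residue-field} gives the criterion. The paper states the middle step in one sentence. You rightly single it out as the crux, but the way you propose to settle it fails, for two reasons.

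First, the substitution is not defined. If \(Y^{p-1}=-p\), then \(v_p(Y)=1/(p-1)\), which is exactly the boundary of convergence of \(\exp\). The terms \(Y^n/n!\) have valuation \(s_p(n)/(p-1)\), where \(s_p(n)\) is the base-\(p\) digit sum, and this does not tend to infinity. So \(\eta=\exp(-Y)\) is meaningless, and no convergent substitute can exist because \(\log\eta=0\).

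Second, even if you treat \(-\log(1-h_t\pi)=\sum_m P_m(h_t)Y^m\) as valid, the congruent degrees are not smaller. For \(m=k+p-1\) one has \(v_p(m!)=1\), so \(P_m(h_t)Y^m=-p\,P_m(h_t)Y^k\) is generically of the same size as \(P_k(h_t)Y^k\). For \(p=5\), \(k=3\), one checks \(7!\,P_7\equiv 3!\,P_3\pmod 5\). Hence the \(r=1\) term alters the residue of the \(Y^3\)-coefficient, and the series in \(r\) need not converge. So ``the \(r=0\) term dominates'' is false, and this route does not produce \(P_k(h_t)\). Note also that the normalizing constant must be a \(p\)-adic unit, not a positive power of \(p\); otherwise the residue-field coordinate would vanish identically.

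The fix is to truncate rather than estimate.

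1. Choose \(\lambda\) with \(\lambda^{p-1}=-p\) and \(\eta\equiv 1+\lambda\pmod{\lambda^2}\). Then \(\sigma_a\lambda=\omega(a)\lambda\) on the local inertia group \(\Delta\cong(\Z/p\Z)^\times\).
2. Put \(U^{(i)}=1+\lambda^i\cO\), with \(\cO\) the local integers and \(\kappa\) the residue field. We have \(U^{(1)p}\subseteq U^{(p)}\) and \(U^{(p+1)}=U^{(2)p}\), and \(\Delta\) acts on \(U^{(p)}/U^{(p+1)}\) through \(\omega\). Hence, for \(3\le k\le p-2\), the \(\omega^k\)-part of \(U^{(1)}/U^{(1)p}\) equals that of \(U^{(1)}/U^{(p)}\).
3. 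The truncated logarithm \(L(1+x)=\sum_{n<p}(-1)^{n+1}x^n/n\) has \(p\)-integral coefficients. It induces a \(\Delta\)-equivariant isomorphism
\[
 U^{(1)}/U^{(p)}\cong\lambda\cO/\lambda^p\cO=\bigoplus_{i=1}^{p-1}\kappa\,\lambda^i ,
\]
inverse to the truncated exponential, whose \(\omega^k\)-part is \(\kappa\lambda^k\).
4. Since \(\sigma_a\eta=\eta^a\), the element \(L(\eta)\) lies in the line \(\kappa\lambda\), and it is \(\equiv\lambda\pmod{\lambda^2}\). Hence \(L(\eta)\equiv\lambda\), which gives Dwork's congruence \(\eta\equiv\sum_{i<p}\lambda^i/i!\pmod{\lambda^p}\).
5. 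Substitute this into \(L(1-h_t(1-\eta))\). Every monomial of degree \(\ge p\) lies in \(\lambda^p\cO\) and drops out. The coefficients in degrees \(<p\) agree with those of the formal series \(\log(1-h_t(1-e^{\lambda}))\).
6. By Proposition~\ref{prop:universal-generating} with \(Y=-\lambda\), the \(\lambda^k\)-coordinate is \((-1)^{k+1}P_k(h_t)=P_k(h_t)\), since \(k\) is odd.

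In this setup the degrees \(k+(p-1)r\) with \(r\ge 1\) never appear, and neither do the \(P_m\) with \(m\ge p\), whose denominators contain \(p\). With this replacement the rest of your argument goes through.
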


\begin{proof}
The nontrivial \(\omega^k\)-projection kills the constant factor
in \eqref{eq:local-factor}.  Apply the local logarithm to
\(1-h_t\pi\), project onto the \(\omega^k\)-eigenspace, and then
sum the conductor components against \(\bar\chi(t)\).  The
coefficient produced by the local projector is exactly
\(P_k(h_t)\) by Proposition~\ref{prop:universal-generating}.
The last assertion is Theorem~\ref{thm:residue-field}.
\end{proof}

\begin{remark}[The global boundary]
Proposition~\ref{prop:local-spectrum} is uniform in \(f\), but a
global unramified Kummer extension requires more: an integral
projected representative, control at every prime above \(p\), and
control at primes dividing \(f\).  These conditions are proved
below for \(f=5\).  No general-conductor class-field assertion is
being smuggled into the Fourier identity.
\end{remark}

\section{The conductor-five projected unit}

Henceforth assume
\[
 p\equiv1\pmod{20},\qquad
 K_p=\Q(\zeta_{5p}),\qquad
 \zeta=\zeta_5,
 \qquad u=1-\zeta\eta.
\]
The two primitive quartic characters modulo \(5\) are conjugate.
At a chosen \(p\)-adic embedding let \(\iota_p\) be the smaller
root of \(X^2+1\) in \(\F_p\), and put
\[
 \chi_+(2)=\iota_p,
 \qquad
 \chi_-(2)=-\iota_p.
\]

\begin{lemma}[A genuine circular unit]
\label{lem:global-unit}
The element \(u\) is a global unit and
\[
 N_{K_p/\Q(\zeta)}(u)=1.
\]
\end{lemma}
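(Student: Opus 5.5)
The argument has two parts: first show that \(u=1-\zeta\eta\) is a unit, then compute its norm directly through the minimal polynomial of \(\eta=\zeta_p\) over \(\Q(\zeta)\).

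\emph{Unit property.} Since \(p\equiv1\pmod{20}\), we have \(p\neq5\). Hence \(\zeta\eta\) is a primitive \(5p\)-th root of unity, because \(\gcd(5,p)=1\). The conductor \(5p\) is not a prime power, so the classical fact applies: for \(n\) not a prime power, \(1-\zeta_n\) is a unit. To keep the argument self-contained, I would recall the short proof. Evaluating \(\Phi_n(X)=\prod(X-\zeta_n^a)\) at \(X=1\) gives \(\prod_a(1-\zeta_n^a)=\Phi_n(1)\). Moreover \(\Phi_n(1)=1\) when \(n\) has at least two distinct prime factors. This follows from \(\prod_{d\mid n,d>1}\Phi_d(1)=n\) together with \(\Phi_{q^e}(1)=q\), by induction on \(n\). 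So the product of all Galois conjugates of \(u\) over \(\Q\) is \(1\). Therefore \(u\) divides \(1\) in \(\Z[\zeta_{5p}]\), and \(u\) is a unit.

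\emph{Norm to \(\Q(\zeta)\).} The extensions \(\Q(\zeta_5)\) and \(\Q(\zeta_p)\) are linearly disjoint, since their conductors are coprime. So \(\Gal(K_p/\Q(\zeta))\) acts by \(\eta\mapsto\eta^a\) for \(a\in(\Z/p\Z)^\times\), and it fixes \(\zeta\). Thus
\[
N_{K_p/\Q(\zeta)}(u)=\prod_{a=1}^{p-1}(1-\zeta\eta^a).
\]
Set \(X=\zeta^{-1}\) in \(\prod_{a=1}^{p-1}(X-\eta^a)=\Phi_p(X)=(X^p-1)/(X-1)\), and pull out the factor \(\zeta^{p-1}\). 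This gives
\[
\prod_a(1-\zeta\eta^a)=\zeta^{p-1}\Phi_p(\zeta^{-1})
=\zeta^{p-1}\,\frac{\zeta^{-p}-1}{\zeta^{-1}-1}.
\]
Now use \(p\equiv1\pmod 5\), so that \(\zeta^{p}=\zeta\) and \(\zeta^{p-1}=1\). The expression then reduces to \((\zeta^{-1}-1)/(\zeta^{-1}-1)=1\).

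\emph{Main obstacle.} There is no genuine obstacle here. The only point that needs care is the congruence hypothesis: it is \(p\equiv1\pmod 5\) that makes the norm exactly \(1\) rather than a root of unity times a quotient of cyclotomic units. In general one would get \(\zeta^{p-1}(1-\zeta^{-p})/(1-\zeta^{-1})\), which is a unit but not \(1\). Once the norm equals \(1\), the unit property also follows a second time, since \(u\) times the other conjugates equals \(1\) in \(\Z[\zeta_{5p}]\). This would make the unit step via \(\Phi_{5p}(1)=1\) optional, but I would include both arguments for clarity.
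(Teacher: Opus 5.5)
Your proposal is correct and is essentially the paper's argument: unit property via \(\Phi_{5p}(1)=1\), and norm via the cyclotomic product identity, where your \(\zeta^{p-1}\Phi_p(\zeta^{-1})\) equals the paper's \((1-\zeta^p)/(1-\zeta)\) and collapses to \(1\) because \(p\equiv1\pmod5\). Your extra remark is also valid: the norm identity already exhibits an inverse of \(u\) in \(\Z[\zeta_{5p}]\), which makes the \(\Phi_{5p}(1)\) step redundant.
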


\begin{proof}
The product \(\zeta\eta\) is a primitive \(5p\)-th root of
unity.  Since \(5p\) has two distinct prime factors,
\(\Phi_{5p}(1)=1\), and hence \(u\) is a unit.  Moreover,
\[
 \prod_{r=1}^{p-1}(1-\zeta\eta^r)
 =\frac{1-\zeta^p}{1-\zeta}=1,
\]
because \(p\equiv1\pmod5\).  The omitted \(r=0\) factor is
essential in the quotient.
\end{proof}

Fix \(\chi\in\{\chi_+,\chi_-\}\).  For even \(j\), put
\[
 k=p-j,\qquad
 \theta=\chi\omega^k,
 \qquad
 \psi=\bar\chi\omega^j.
\]
Then \(\psi=\omega\theta^{-1}\).  For
\(a\in(\Z/5p\Z)^\times\), choose
\[
 c_a\in\{0,\ldots,p-1\},\qquad
 c_a\equiv
 \bigl(4(p-1)\bigr)^{-1}\theta(a)^{-1}\pmod p,
\]
and define
\begin{equation}
\label{eq:projected-unit}
 \alpha_{\chi,k}
 =\prod_{a\in(\Z/5p\Z)^\times}\sigma_a(u)^{c_a}.
\end{equation}

\begin{proposition}[Integral projection]
\label{prop:idempotent}
The class of \(\alpha_{\chi,k}\) in
\(K_p^\times/K_p^{\times p}\) is the \(\theta\)-projection of
\(u\).  It is independent of the selected integer lifts \(c_a\).
\end{proposition}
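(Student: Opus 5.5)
The plan is to regard \(V=K_p^\times/K_p^{\times p}\) as an \(\F_p[G]\)-module, where \(G=\Gal(K_p/\Q)\cong(\Z/5p\Z)^\times\) acts through \(\sigma_a\), and to identify the exponent vector \((c_a)\) with the reduction of the standard idempotent. Since \(\chi\) takes values in \(\mu_4\subset\F_p\) (because \(p\equiv1\pmod 4\)) and \(\omega\) takes values in \(\mu_{p-1}\subset\F_p\), the character \(\theta=\chi\omega^k\) is \(\F_p\)-valued. Also \(|G|=4(p-1)\) is prime to \(p\). Hence
\[
 e_\theta=\frac1{|G|}\sum_{a\in G}\theta(a)^{-1}\sigma_a\in\F_p[G]
\]
is a well-defined idempotent. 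It satisfies \(\sigma_b e_\theta=\theta(b)e_\theta\), and \(V[\theta]=e_\theta V\) is the \(\theta\)-projection. By construction \(c_a\bmod p\) is exactly the coefficient \(|G|^{-1}\theta(a)^{-1}\) of \(e_\theta\). Therefore \(\alpha_{\chi,k}=u^{\sum c_a\sigma_a}\) represents \(e_\theta\cdot[u]\) in \(V\), which is the first claim.

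Independence of the lifts is the easy step. Replacing \(c_a\) by \(c_a+pn_a\) multiplies \(\alpha_{\chi,k}\) by \(\prod_a\sigma_a(u)^{pn_a}\), which is a \(p\)-th power in \(K_p^\times\), so its class in \(V\) does not change. The action of \(\Z[G]\) on \(K_p^\times\) descends to \(\F_p[G]\) on \(V\) precisely because \(pV=0\) (written multiplicatively), so the class depends only on \(c_a\bmod p\).

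The one point that needs care is fixing conventions so that the characters on \(G\) are the ones intended. I would identify \((\Z/5p\Z)^\times\cong(\Z/5)^\times\times(\Z/p)^\times\) by CRT, so that \(\sigma_a(\zeta)=\zeta^a\) and \(\sigma_a(\eta)=\eta^a\). Then \(\chi(a)\) means \(\chi(a\bmod5)\), and \(\omega(a)\) is the reduction of the Teichmüller lift of \(a\bmod p\), read in \(\F_p\) via the chosen embedding, which is also how \(\iota_p\) was fixed. With these conventions, \(\theta\) is a genuine homomorphism \(G\to\F_p^\times\), and the orthogonality relations in \(\F_p\) give \(e_\theta^2=e_\theta\). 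This verification is routine once the identification is fixed.
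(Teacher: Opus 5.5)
Your proposal is correct and follows the paper's proof. The paper likewise observes that the exponents \(c_a\) lift the idempotent \(e_\theta=\frac1{4(p-1)}\sum_a\theta(a)^{-1}\sigma_a\in\F_p[\Gal(K_p/\Q)]\), and that changing a lift by a multiple of \(p\) alters \(\alpha_{\chi,k}\) by a global \(p\)-th power; your extra checks (that \(\theta\) is \(\F_p\)-valued, that \(p\nmid 4(p-1)\), and that \(\sigma_b e_\theta=\theta(b)e_\theta\)) only make explicit what the paper leaves implicit.
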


\begin{proof}
The coefficients lift the idempotent
\[
 e_\theta=
 \frac1{4(p-1)}
 \sum_{a\in(\Z/5p\Z)^\times}
 \theta(a)^{-1}\sigma_a
 \quad\text{in }\F_p[\Gal(K_p/\Q)].
\]
Changing any coefficient by a multiple of \(p\) changes
\eqref{eq:projected-unit} by a global \(p\)-th power.
\end{proof}

There are four primes of \(K_p\) above \(p\), corresponding to
the four reductions \(\xi^b\) of \(\zeta_5\).  At the prime
corresponding to \(\xi^b\), substitution \(t\mapsto bt\) multiplies
the local Fourier coordinate by \(\chi(b)\).  Hence vanishing at
one of the four primes is equivalent to vanishing at all four.

\begin{corollary}[Unramified radical]
\label{cor:unramified}
If \(b_{\chi,j}=0\), then \(\alpha_{\chi,k}\) is a local
\(p\)-th power at every prime above \(p\), and
\[
 K_p(\alpha_{\chi,k}^{1/p})/K_p
\]
is everywhere unramified.
\end{corollary}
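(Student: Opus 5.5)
The argument has a local part and a global part. Locally, I would show that \(\alpha_{\chi,k}\) is a \(p\)-th power in each completion above \(p\). Globally, I would check unramifiedness at all the remaining primes. Throughout, the class of \(\alpha_{\chi,k}\) in \(K_p^\times/K_p^{\times p}\) is handled through Proposition~\ref{prop:idempotent}, which identifies it with \(e_\theta u\). Both properties to be proved depend only on this class. So I may work with \(e_\theta u\) itself, or with any convenient integral representative of it.

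\emph{Away from \(p\).} The extension \(K_p(\alpha^{1/p})/K_p\) is a Kummer extension of exponent \(p\). Its radicand \(\alpha_{\chi,k}\) is a global unit, since it is a product of Galois conjugates of \(u\), and \(u\) is a unit by Lemma~\ref{lem:global-unit}. At a prime \(\mathfrak l\nmid p\), adjoining a \(p\)-th root of a unit is unramified. This covers the primes above \(5\), and there is no separate conductor condition to check there. So ramification can occur only at the four primes above \(p\).

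\emph{At the primes above \(p\).} Fix the prime corresponding to \(\xi\); its completion is \(\Q_p(\eta)\), since \(p\equiv1\pmod5\) puts \(\xi\in\Q_p\). In the local unit group \(U=\Z_p[\eta]^\times\), I would use the standard structure of \(U^{(1)}/U^{(1)p}\) as a \(\Gal(\Q_p(\eta)/\Q_p)\)-module. For each \(i\), the \(\omega^i\)-eigenspace is one-dimensional over \(\F_p\) and is detected by the \(\pi^i\)-coordinate of the logarithm (Artin--Hasse). The one exception is the eigenspace \(\omega^1\), which contains \(\eta\) and is detected separately.

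Next I would decompose globally. The decomposition group of this prime in \(\Gal(K_p/\Q)\cong(\Z/5)^\times\times(\Z/p)^\times\) is the \((\Z/p)^\times\) factor. The factor \((\Z/5)^\times\) permutes the four primes. Writing \(e_\theta\) as \(\chi\)-summation over \(t\) composed with \(\omega^k\)-projection, the local image of \(e_\theta u\) becomes the \(\omega^k\)-projection of \(\prod_t(1-\xi^t\eta)^{\bar\chi(t)}\), up to a nonzero scalar. By \eqref{eq:local-factor}, the constant factors \(1-\xi^t\) lie in \(\Z_p^\times\). These have trivial Galois action, so the \(\omega^k\)-projector kills them, because \(k\not\equiv0\pmod{p-1}\). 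What remains, by Proposition~\ref{prop:local-spectrum}, is an element of the \(\omega^k\)-eigenspace whose leading coordinate is \(A_{\chi,k}\).

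The main obstacle is justifying that this one coordinate determines the local class completely. I would argue as follows. An \(\omega^k\)-eigenvector in \(U^{(1)}\) has logarithm supported on degrees \(\equiv k\pmod{p-1}\). Since \(k\ge3\), the first such degree is \(k\) itself, and the next is \(k+p-1>p\). Every element of \(U^{(p+1)}\) is a \(p\)-th power, by the logarithm and exponential estimates. Hence the eigenvector is a \(p\)-th power exactly when its \(\pi^k\)-coordinate vanishes modulo \(p\). Here one must check two things: that \(k\ne1\), and that \(k\ne p\), where the special \(\omega\)-class related to \(\eta\) and \(p\)-th power obstructions sits. Both hold because \(3\le k\le p-2\). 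By Theorem~\ref{thm:residue-field}, \(b_{\chi,j}=0\) gives \(\overline{A_{\chi,k}}=0\), so \(\alpha_{\chi,k}\) is a local \(p\)-th power at this prime.

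For the other three primes, I would use the remark preceding the corollary: moving to \(\xi^b\) multiplies the coordinate by \(\chi(b)\), so it vanishes at all four primes. Finally, a \(p\)-th root of a local \(p\)-th power generates a trivial local extension, so the extension is unramified (indeed split) above \(p\). This completes the proof that \(K_p(\alpha_{\chi,k}^{1/p})/K_p\) is everywhere unramified.
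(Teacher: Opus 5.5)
Your proof is correct and follows essentially the same route as the paper. Away from \(p\) the radicand is a global unit. At the primes above \(p\), Theorem~\ref{thm:residue-field} and Proposition~\ref{prop:local-spectrum} make the single \(\omega^k\)-coordinate vanish, and the one-coordinate local Kummer criterion (valid because \(3\le k\le p-2\)) then makes the projected class a local \(p\)-th power; the \(\chi(b)\)-scaling remark carries this to all four primes.

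Your analysis of \(U^{(1)}/U^{(1)p}\) simply unpacks that criterion, with one small imprecision. The claim that the logarithm is ``supported on degrees \(\equiv k\pmod{p-1}\)'' holds for a uniformizer on which the local Galois group acts through \(\omega\) (e.g.\ \(\varpi\) with \(\varpi^{p-1}=-p\)), not for \(\pi=1-\eta\). This does not affect the conclusion, since only the leading coordinate in \(U^{(k)}/U^{(k+1)}\) matters.
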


\begin{proof}
Theorem~\ref{thm:residue-field} and
Proposition~\ref{prop:local-spectrum} make the local
\(\omega^k\)-coordinate vanish at every prime above \(p\).
Since \(3\leq k\leq p-2\), the one-coordinate local Kummer
criterion makes the projected class a local \(p\)-th power there.
At every finite prime away from \(p\), the defining element is a
global unit, so its Kummer extension is unramified.
\end{proof}

For computation one can compress the divided Bernoulli sum.

\begin{lemma}[Short finite form]
\label{lem:short-form}
Let \(s=\chi(2)\in\F_p\).  For even \(j\) in the stated range,
\[
 \boxed{\qquad
 b_{\chi,j}
 =\sum_{r=1}^{p-1}W_{r\bmod5}(s)r^{-j},
 \qquad}
\]
where
\[
 (W_0,W_1,W_2,W_3,W_4)
 =(-3-s,-3-s,2-s,2+4s,2-s).
\]
\end{lemma}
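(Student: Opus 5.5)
The plan is to start from the explicit sum \eqref{eq:divided-sum} and reorganize it by residues modulo \(p\); by the Chinese remainder theorem this splits the sum into a part that vanishes identically and a part that is visibly divisible by \(p\). Every integer \(a\) with \(1\leq a<5p\) and \((a,5p)=1\) can be written uniquely as
\[
 a=r+pm,\qquad 1\leq r\leq p-1,\quad 0\leq m\leq 4,
\]
subject to the condition \(5\nmid r+pm\). This condition is automatically encoded by the convention \(\chi(a)=0\) for \(5\mid a\), so I can sum over all pairs \((r,m)\). Since \(a\equiv r\pmod p\), the Teichm\"uller lift satisfies \(\widetilde a=\omega(r)\), which depends only on \(r\). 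Since \(p\equiv1\pmod 5\), we have \(\chi(a)=\chi(r+m)\). Hence the characteristic-zero sum equals
\[
 \sum_{r=1}^{p-1}\omega(r)^{-j}\Bigl(r\sum_{m=0}^{4}\chi(r+m)
 +p\sum_{m=0}^{4}m\,\chi(r+m)\Bigr).
\]

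The key step comes next. As \(m\) runs over \(0,\dots,4\), the values \(r+m\) run over a complete residue system modulo \(5\), so \(\sum_m\chi(r+m)=0\) exactly, by orthogonality for the nontrivial character \(\chi\). The first term therefore vanishes in characteristic zero, and the whole sum is \(p\) times an integral element. Dividing by \(p\) as in \eqref{eq:divided-sum} and reducing modulo \(\mathfrak P\), the factor \(\omega(r)^{-j}\) reduces to \(r^{-j}\). This gives
\[
 b_{\chi,j}=\sum_{r=1}^{p-1}W_{r\bmod 5}\,r^{-j},\qquad
 W_c=\sum_{m=0}^{4}m\,\chi(c+m).
\]

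It remains to evaluate the five weights \(W_c\) in terms of \(s=\chi(2)\). Since \(\chi\) is quartic with \(2\) generating \((\Z/5\Z)^\times\), its values are \(\chi(1)=1\), \(\chi(2)=s\), \(\chi(4)=s^2=-1\), \(\chi(3)=\chi(8)=s^3=-s\), and \(\chi(0)=0\). Direct substitution then gives the weights. For example, \(W_0=1+2s-3s-4=-3-s\) and \(W_1=s-2s-3=-3-s\); the weights \(W_2,W_3,W_4\) are computed in the same way. Throughout, the values of \(\chi\) are read in \(\F_p\) via the chosen embedding, in which \(\chi(2)=\pm\iota_p\).

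I expect no step to be a real obstacle. The only points needing care are the following:
\begin{itemize}
\item Use \(p\equiv1\pmod5\) to replace \(\chi(r+pm)\) by \(\chi(r+m)\).
\item Observe that the cancellation of the \(r\)-part happens exactly in characteristic zero, which is what justifies dividing by \(p\).
\item Check that the Teichm\"uller reduction is compatible with \(\mathfrak P\).
\end{itemize}
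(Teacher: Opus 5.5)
Your proposal is correct and follows essentially the same route as the paper: you group the terms \(a=r+mp\), use \(p\equiv1\pmod5\) to get \(\chi(a)=\chi(r+m)\), cancel \(r\sum_m\chi(r+m)=0\) by orthogonality, and read off the weights \(W_c=\sum_{m=0}^{4}m\,\chi(c+m)\). Your formula for \(W_c\) also gives the three weights you did not write out, \(W_2=2-s\), \(W_3=2+4s\), \(W_4=2-s\), so all five agree with the paper's vector.
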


\begin{proof}
For fixed \(r\in\{1,\ldots,p-1\}\), group the five terms
\(r+mp\), \(0\leq m<5\).  Exactly one is divisible by \(5\),
and the other four have the same Teichm\"uller factor.  Since
\(p\equiv1\pmod5\), their character values are \(\chi(r+m)\).
The four nonzero values sum to zero, so
\[
 \sum_m(r+mp)\chi(r+m)
 =p\sum_m m\chi(r+m).
\]
Evaluating the final four-term sum for each residue class of
\(r\pmod5\) gives the displayed vector.
\end{proof}

\section{The complete conductor-five calculation below 500}

\begin{theorem}[Conductor-five class components]
\label{thm:catalogue}
For \(p<500\), \(p\equiv1\pmod{20}\), and
\(\chi\in\{\chi_+,\chi_-\}\), the equality
\(b_{\chi,j}=0\) holds precisely on the following eleven lines:
\[
\begin{array}{c@{\qquad}c@{\qquad}r@{\qquad}r}
p&\chi&j&k=p-j\\ \toprule
61 &+&34&27\\
61 &-&10&51\\
101&+&92&9\\
181&+&104&77\\
181&+&138&43\\
241&-&196&45\\
281&+&46&235\\
421&+&50&371\\
421&-&90&331\\
461&+&2&459\\
461&+&376&85
\end{array}
\]
For every row, the extension
\[
 K_p(\alpha_{\chi,k}^{1/p})/K_p
\]
is nontrivial, everywhere unramified, and is the complete
\(\psi=\bar\chi\omega^j\) component of the Hilbert class field.
Moreover,
\[
 \boxed{\qquad
 \#\bigl(\Cl(K_p)\otimes\Z_p\bigr)_\psi=p.
 \qquad}
\]
Every local and generalized Bernoulli zero in the table is simple.
\end{theorem}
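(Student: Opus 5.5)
The proof combines a finite computation with the structural results already in hand. I would split it into four parts: (i) the enumeration of zero lines, (ii) unramifiedness, (iii) nontriviality, and (iv) the identification of the full \(\psi\)-component and its order.

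For (i), the primes \(p<500\) with \(p\equiv1\pmod{20}\) are \(41,61,101,181,241,281,401,421,461\). For each such \(p\) I would determine \(\iota_p\) and set \(s=\pm\iota_p\). I would then evaluate \(b_{\chi,j}\) for every even \(j\) with \(2\le j\le p-3\), using the short finite form of Lemma~\ref{lem:short-form}. This is a sum of \(p-1\) terms in \(\F_p\), so the whole sweep is a trivial integer computation, and its output should be exactly the eleven rows of the table. Theorem~\ref{thm:residue-field} converts each Bernoulli zero into a spectral zero \(\overline{A_{\chi,k}}=0\) at every prime above \(p\), and the remark on the four primes above \(p\) guarantees the choice of prime is immaterial.

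For simplicity, I would show that \(B_{1,\chi\omega^{-j}}\) has \(\mathfrak P\)-adic valuation exactly one. Equivalently, the next digit of the divided sum \eqref{eq:divided-sum}, computed modulo \(p^2\) with Teichm\"uller lifts taken modulo \(p^2\), should be nonzero; this is the second check in the program. For (ii), Corollary~\ref{cor:unramified} applies directly on each row, since \(3\le k\le p-2\) there. So \(K_p(\alpha_{\chi,k}^{1/p})/K_p\) is everywhere unramified, and by Proposition~\ref{prop:idempotent} its Kummer class lies in the \(\theta\)-line. By Kummer duality with \(\psi=\omega\theta^{-1}\), the Galois group of this extension is a \(\psi\)-quotient of the class group.

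For (iii), which I expect to be the main obstacle, I must show that \(\alpha_{\chi,k}\) is not a global \(p\)-th power. I would look for a prime \(\ell\equiv1\pmod{5p}\), so \(\ell\) splits completely in \(K_p\), for which \(\alpha_{\chi,k}\) is not a \(p\)-th power modulo some prime \(\lambda\mid\ell\). Concretely, I would fix an element \(z\) of order \(5p\) in \(\F_\ell^\times\) to represent \(\zeta\eta\). I would then compute \(\prod_a(1-z^a)^{c_a}\) and test whether its \((\ell-1)/p\)-th power is different from \(1\). Since the class is nonzero precisely when some such \(\ell\) exists (Chebotarev), a search over small \(\ell\) should terminate. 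The difficulty is only that this cannot be settled by a uniform argument: it must be certified row by row, and I would record the witness \(\ell\) for each row. A nonzero class gives a cyclic degree-\(p\) unramified extension, so \(\#(\Cl(K_p)\otimes\Z_p)_\psi\ge p\).

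For (iv), I would invoke the characterwise abelian Main Conjecture (Mazur--Wiles, in Greither's form for odd characters with \(p\nmid\) order). This gives \(\#(\Cl\otimes\Z_p)_\psi=\#\,\cO/B_{1,\psi^{-1}}\cO\) at the relevant prime. Here \(\psi^{-1}=\chi\omega^{-j}\), and the coefficient ring is \(\Z_p\) because \(p\equiv1\pmod 4\). Valuation one then gives order exactly \(p\). Combined with the lower bound from (iii), the \(\psi\)-component is cyclic of order \(p\), and the extension \(K_p(\alpha_{\chi,k}^{1/p})\) is the whole \(\psi\)-part of the Hilbert class field.
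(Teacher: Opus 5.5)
Your four layers are the paper's proof: enumeration through Lemma~\ref{lem:short-form}, a second-order digit for simplicity, Corollary~\ref{cor:unramified} for unramifiedness, a split-prime power-residue certificate for nontriviality, and the characterwise Main Conjecture with $v_p(B_{1,\psi^{-1}})=1$ for completeness and order $p$. What is missing is half of the last sentence of the statement. You show only that the generalized Bernoulli zero is simple. The theorem also asserts that the \emph{local} zero is simple, i.e.\ $v_p(A_{\chi,k})=1$ in $\Z_p$, and your plan never checks this.

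This does not follow from the Bernoulli check. Theorem~\ref{thm:residue-field} is an identity in $\kappa_{\mathfrak P}$ only. In characteristic zero, Theorem~\ref{thm:char-zero} makes $A_{\chi,k}$ a $p$-unit times $B_{k,\chi}/k$. Up to an Euler factor $\equiv1\pmod{p^2}$, $B_{k,\chi}/k$ and $B_{1,\chi\omega^{-j}}$ are the values of the same $p$-adic $L$-function $L_p(s,\chi\omega^k)$ at $s=1-k$ and $s=0$. Since $k-1$ is a $p$-adic unit, these values agree modulo $p$, which is the Kummer congruence. Their second digits, however, differ by $(k-1)$ times a unit multiple of the linear coefficient of the Iwasawa power series. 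So one can have valuation $1$ while the other has valuation $\geq2$.

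You therefore need a separate second-order computation. Lift $\xi$ and $\chi(2)$ Teichm\"uller-wise to $\Z/p^2\Z$; this is legitimate because the denominators of $P_k$ divide $k!$, which is prime to $p$. Then check that $A_{\chi,k}/p\not\equiv0\pmod p$. This is the column $S_{\chi,k}/p$ of Proposition~\ref{prop:digits}. The class-group argument does not use it, since it needs only $v_p(B_{1,\psi^{-1}})=1$, but it is part of the statement.

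A smaller point on your Bernoulli check: Teichm\"uller lifts modulo $p^2$ give the second digit only if you first remove the factor $p$ exactly, as in the proof of Lemma~\ref{lem:short-form}. That means evaluating $\sum_r W_{r\bmod5}(\widetilde s)\,\widetilde r^{-j}$ modulo $p^2$. Applied to the raw sum in \eqref{eq:divided-sum}, lifts modulo $p^2$ recover only the vanishing first digit. That is why the paper computes $T_{\chi,j}$ with lifts modulo $p^3$.
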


The proof occupies the next three sections.  Its three finite
layers are logically separate: exhaustive enumeration, a
second-order valuation calculation, and a nontrivial Artin symbol.

The ordinary irregular indices at the seven supporting primes are
\[
\begin{array}{c@{\qquad}l}
p&\text{ordinary irregular indices}\\ \toprule
61&\varnothing\\
101&68\\
181&\varnothing\\
241&\varnothing\\
281&\varnothing\\
421&240\\
461&196.
\end{array}
\]
Consequently six of the eleven rows lie at classically regular
primes.  Ordinary irregularity neither implies nor is implied by
the non-real twisted degeneracy in this range.

\section{Second-order digits}

For each row, lift the selected fifth root and \(\chi(2)\)
Teichm\"uller-wise to \(\Z/p^2\Z\), and set
\[
 S_{\chi,k}=
 \sum_{t=1}^{4}\bar\chi(t)P_k(h_t)\pmod{p^2}.
\]
Using Teichm\"uller lifts modulo \(p^3\), put
\[
 T_{\chi,j}=
 \sum_{\substack{1\leq a<5p\\(a,5p)=1}}
 a\,\chi(a)\widehat a^{-j}\pmod{p^3}.
\]

\begin{proposition}[Exact divided digits]
\label{prop:digits}
The exact divided digits are
\[
\begin{array}{c@{\;}c@{\;}r@{\;}r@{\;}r@{\;}r}
p&\chi&j&S_{\chi,k}/p&T_{\chi,j}/p^2&B/p\\ \toprule
61 &+&34 &33 &11 &51\\
61 &-&10 &12 &3  &25\\
101&+&92 &92 &70 &14\\
181&+&104&127&67 &122\\
181&+&138&95 &14 &39\\
241&-&196&235&152&175\\
281&+&46 &153&145&29\\
421&+&50 &126&230&46\\
421&-&90 &91 &379&160\\
461&+&2  &437&156&400\\
461&+&376&265&261&421
\end{array}
\]
where
\[
 B=B_{1,\chi\omega^{-j}}=\frac{T_{\chi,j}}{5p}.
\]
Thus, for every row,
\[
 v_p(S_{\chi,k})=1,
 \qquad
 v_p(T_{\chi,j})=2,
 \qquad
 v_p(B_{1,\chi\omega^{-j}})=1.
\]
\end{proposition}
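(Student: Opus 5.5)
This is a finite computational statement, so the proof will be a deterministic certificate computation over \(\Z/p^2\Z\) and \(\Z/p^3\Z\), organized so that each entry can be checked independently. For each of the eleven rows I will work with explicit integer lifts. In \(\F_p\) I take \(\iota_p\) as in the conductor-five setup and a fifth root of unity \(\xi\). I Hensel-lift these Teichm\"uller-wise: \(\hat\xi=\xi^{p^{e}}\) iterated, or equivalently \(x\mapsto x^{p}\) applied until stable modulo \(p^2\), resp.\ \(p^3\). The character values \(\bar\chi(t)\) are then powers of the lifted \(\chi(2)\), via \(t\equiv2^{\,i}\pmod5\). The Teichm\"uller lifts \(\widehat a\) are computed the same way, as \(\widehat a=a^{p^2}\bmod p^3\).

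The first column, \(S_{\chi,k}\), I will evaluate directly from \eqref{eq:Pm-definition}. The factor \(1/k!\) is a \(p\)-adic unit because \(k<p\), and \(h_t=\hat\xi^t/(\hat\xi^t-1)\) is a unit modulo \(p^2\) because \(\xi^t\neq1\). So \(S_{\chi,k}\) is a well-defined residue modulo \(p^2\). The Stirling numbers \(\left\{\begin{smallmatrix}k\\n\end{smallmatrix}\right\}\) modulo \(p^2\) come from the standard recurrence. Theorem~\ref{thm:residue-field} together with Theorem~\ref{thm:catalogue} already gives \(S_{\chi,k}\equiv0\pmod p\). So I only need to read off the digit \(S_{\chi,k}/p\bmod p\) and check that it is nonzero.

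For \(T_{\chi,j}\) I will compute the sum modulo \(p^3\) directly. There are only \(4(p-1)<2000\) terms, so this is cheap. Its vanishing modulo \(p^2\) is checked as part of the computation. Divisibility by \(p\) is automatic, since the proof of Lemma~\ref{lem:short-form} shows the inner grouped sums produce a factor \(p\). Divisibility by \(p^2\) is the Bernoulli zero, by \eqref{eq:divided-sum}. Then \(T_{\chi,j}/p^2\bmod p\) is the tabulated digit. The last column follows from \(B=T/(5p)\), so \(B/p\equiv (T/p^2)\cdot 5^{-1}\pmod p\). I will also check this as an internal consistency test, for example \(11\cdot5^{-1}\equiv51\pmod{61}\) in the first row.

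The valuation statements then follow at once. A nonzero first digit gives \(v_p(S)=1\) and \(v_p(T)=2\). Since \(5\) is a unit, \(v_p(B)=1\).

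The main obstacle is making the lifts canonical enough that the printed digits are well defined. \(S\) depends on the choice of lifts of \(\xi\) and of \(\chi(2)\) modulo \(p^2\), not just on their residues. The residue-field identity only controls the value modulo \(p\). I therefore fix Teichm\"uller lifts throughout and use the same embedding \(\xi\) as in the catalogue, so that the first-order digits are reproducible. I will not attempt to prove a relation between the \(S\) and \(T\) digits. A second-order refinement of Theorem~\ref{thm:residue-field} would require Kummer congruences modulo \(p^2\), which introduce correction terms. The two columns are instead independent certificates, and only their nonvanishing is used later.
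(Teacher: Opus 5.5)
Your proposal is correct and is essentially the paper's proof. The paper evaluates $S_{\chi,k}$ modulo $p^2$ and $T_{\chi,j}$ modulo $p^3$ directly, observes that every displayed digit is nonzero, and notes that dividing by $5p$ lowers the valuation by one and multiplies the digit by $5^{-1}\bmod p$; your Teichm\"uller-lift conventions and the explanation of $p\mid T_{\chi,j}$ via Lemma~\ref{lem:short-form} are useful extra detail. For $S_{\chi,k}\equiv0\pmod p$, cite only the enumeration part of Theorem~\ref{thm:catalogue}, which rests on Lemma~\ref{lem:short-form}, because the rest of that theorem is proved using this proposition.
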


\begin{proof}
The entries are direct exact evaluations in \(\Z/p^2\Z\) and
\(\Z/p^3\Z\).  All displayed divided digits are nonzero.  Division
by \(5p\) lowers the valuation by one and multiplies the final
digit by \(5^{-1}\pmod p\).
\end{proof}

\section{Finite Artin certificates}

Let \(q\equiv1\pmod{5p}\) be prime, and let
\(\Xi\in\F_q^\times\) have exact order \(5p\).  Put
\[
 \eta_q=\Xi^5,\qquad \zeta_q=\Xi^p.
\]
Reduction of \eqref{eq:projected-unit} at
\((q,\zeta_{5p}-\Xi)\) is
\[
 M_{p,\chi,j}
 =\prod_{a\in(\Z/5p\Z)^\times}
 (1-\zeta_q^a\eta_q^a)^{c_a}.
\]

\begin{proposition}[Certificate table]
\label{prop:certificates}
For the following data,
\[
 M_{p,\chi,j}^{(q-1)/p}=\eta_q^e\neq1:
\]
\[
\begin{array}{c@{\;}c@{\;}r@{\quad}r@{\quad}r@{\quad}r@{\quad}r}
p&\chi&j&q&\Xi&M&e\\ \toprule
61 &+&34 &1831&64  &334 &25\\
61 &-&10 &1831&64  &1258&13\\
101&+&92 &5051&1024&1710&31\\
181&+&104&1811&9   &438 &74\\
181&+&138&1811&9   &829 &89\\
241&-&196&2411&9   &1465&5\\
281&+&46 &8431&64  &8096&168\\
421&+&50 &4211&9   &2467&151\\
421&-&90 &4211&9   &1706&253\\
461&+&2  &9221&16  &5130&400\\
461&+&376&9221&16  &3883&241
\end{array}
\]
Hence every \(\alpha_{\chi,k}\) in
Theorem~\ref{thm:catalogue} is a nontrivial global Kummer class.
\end{proposition}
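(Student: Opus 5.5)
This statement is a finite computation, so the plan is to reduce it to explicit arithmetic in \(\F_q\), check the values, and then deduce the global conclusion.

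\emph{Step 1: fix the data for each row.} Take \(p\), \(\chi\), \(j\), \(q\) and \(\Xi\) from the table. Check that \(q\) is prime and that \(q\equiv1\pmod{5p}\). Check that \(\Xi\) has exact order \(5p\) in \(\F_q^\times\), by verifying \(\Xi^{5p}=1\), \(\Xi^{p}\neq1\) and \(\Xi^{5}\neq1\). Then put \(\zeta_q=\Xi^p\) and \(\eta_q=\Xi^5\). The exponents \(c_a\) must be computed compatibly with this embedding:
\begin{enumerate}
\item \(\chi(2)=\pm\iota_p\), with \(\iota_p\) the smaller square root of \(-1\) in \(\F_p\);
\item \(\omega\) is the Teichm\"uller character, read modulo \(p\) as \(a\mapsto a\bmod p\);
\item \(\theta(a)=\chi(a\bmod5)\,(a\bmod p)^{k}\in\F_p\), where \(\chi\) on \((\Z/5\Z)^\times\) is determined by \(\chi(2)\), since \(2\) generates.
\end{enumerate}
Form \(c_a\equiv(4(p-1))^{-1}\theta(a)^{-1}\pmod p\) as in \eqref{eq:projected-unit}.

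\emph{Step 2: compute the reduction and the power residue.} Because \(\sigma_a(\zeta_{5p})=\zeta_{5p}^a\), reducing \(\alpha_{\chi,k}\) at the degree-one prime \((q,\zeta_{5p}-\Xi)\) gives the product \(M_{p,\chi,j}\). No factor \(1-\Xi^a\) vanishes, because \(\Xi^a\neq1\) for \(a\) prime to \(5p\). Evaluate \(M\) and then \(M^{(q-1)/p}\) by square-and-multiply. The result should equal the tabulated \(M\), and \(\eta_q^{e}\) with the tabulated \(e\); since \(p\nmid e\), this power is \(\neq1\). These are finite modular exponentiations, which the deterministic program performs, and each one can be checked independently.

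\emph{Step 3: deduce the global statement.} Suppose \(\alpha_{\chi,k}=\beta^p\) for some \(\beta\in K_p^\times\). The prime \(\mathfrak q=(q,\zeta_{5p}-\Xi)\) does not divide \(5p\), and \(\alpha\) is a unit, so \(\beta\) is a \(\mathfrak q\)-unit. Then \(M=\bar\beta^{\,p}\) in \(\F_q^\times\), which forces \(M^{(q-1)/p}=1\). This contradicts Step 2. By Proposition~\ref{prop:idempotent}, changing the lifts \(c_a\) changes \(\alpha\) only by a global \(p\)-th power, so the conclusion does not depend on the choice of lifts. Hence the Kummer class is nontrivial.

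The main obstacle is consistency of conventions between the abstract definition and the computation. The choice of \(\iota_p\), the identification of \(\omega\) with reduction modulo \(p\), and the matching of \(\zeta_q,\eta_q\) with \(\zeta_5,\zeta_p\) must all agree. Otherwise the program certifies a different character line, such as the conjugate \(\chi\) or a different \(k\). I would guard against this by recomputing \(b_{\chi,j}=0\) via Lemma~\ref{lem:short-form}, with the same \(\chi(2)\), inside the same program.
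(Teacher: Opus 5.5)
Your proposal is correct and is essentially the paper's proof: check that \(q\) is prime and that \(\Xi\) has exact order \(5p\), evaluate \(M\) and \(M^{(q-1)/p}\) directly, and observe that a global \(p\)-th power would have trivial \(p\)-th power residue symbol at the split prime \((q,\zeta_{5p}-\Xi)\). Your Step~3 only spells out the \(\mathfrak q\)-unit reduction that the paper leaves implicit; one small slip is that the factors are \(1-\zeta_q^a\eta_q^a=1-\Xi^{a(p+5)}\), not \(1-\Xi^a\), which is harmless because \(\gcd(p+5,5p)=1\).
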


\begin{proof}
In each row, \(q\) is prime, \(\Xi\) has exact order \(5p\), and
direct evaluation gives \(M\).  Its \(p\)-th power residue symbol
is the displayed nonzero power of \(\eta_q\).  A global
\(p\)-th power would have trivial symbol at every such prime.
\end{proof}

\begin{corollary}
\label{cor:degree}
Each row defines a nontrivial everywhere-unramified cyclic
extension of degree \(p\), with class character
\[
 \omega\theta^{-1}=\bar\chi\omega^j=\psi.
\]
\end{corollary}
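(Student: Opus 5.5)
The corollary combines three results already established. Corollary~\ref{cor:unramified} supplies unramifiedness, Proposition~\ref{prop:certificates} supplies nontriviality, and a short Galois-equivariance computation identifies the class character.

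\emph{Degree, cyclicity and ramification.} Since \(p\equiv1\pmod{20}\), the field \(K_p=\Q(\zeta_{5p})\) contains \(\zeta_p\), so Kummer theory applies. For each row of Theorem~\ref{thm:catalogue} we have \(b_{\chi,j}=0\), so Corollary~\ref{cor:unramified} shows that \(K_p(\alpha_{\chi,k}^{1/p})/K_p\) is everywhere unramified. Proposition~\ref{prop:certificates} shows that \(\alpha_{\chi,k}\notin K_p^{\times p}\). Kummer theory with \(\mu_p\subset K_p\) then makes the extension cyclic of degree exactly \(p\). At the infinite places nothing needs checking: \(K_p\) is totally imaginary, so its archimedean places are complex and cannot ramify.

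\emph{Identifying the class character.} By Proposition~\ref{prop:idempotent}, the class of \(\alpha=\alpha_{\chi,k}\) in \(K_p^\times/K_p^{\times p}\) lies in the \(\theta\)-eigenspace, so \(\sigma_a(\alpha)\equiv\alpha^{\theta(a)}\) modulo \(p\)-th powers. I will use the standard reflection pairing. Let \(\tau\in\Gal(K_p(\alpha^{1/p})/K_p)\) and \(\beta=\alpha^{1/p}\). Extend \(\sigma_a\) to the Kummer extension, which is Galois over \(\Q\) because the Kummer class is \(\Gal(K_p/\Q)\)-stable. Apply \(\sigma_a\) to the relation \(\tau(\beta)/\beta=\zeta_p^{x}\), and use \(\sigma_a(\beta)=\beta^{\theta(a)}\gamma\) with \(\gamma\in K_p\), together with \(\sigma_a(\zeta_p)=\zeta_p^{\omega(a)}\). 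This gives
\[
 \sigma_a\tau\sigma_a^{-1}=\tau^{\omega(a)\theta(a)^{-1}}.
\]
Hence the Galois group, viewed through the Artin map as a quotient of \(\Cl(K_p)\otimes\Z_p\), transforms under \(\Gal(K_p/\Q)\) by \(\omega\theta^{-1}\).

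It remains to compute this character from the definitions \(\theta=\chi\omega^k\), \(\psi=\bar\chi\omega^j\) and \(k=p-j\):
\[
 \omega\theta^{-1}=\bar\chi\,\omega^{1-k}=\bar\chi\,\omega^{1-p+j}=\bar\chi\,\omega^{j},
\]
using \(\omega^{p-1}=1\). This gives \(\omega\theta^{-1}=\psi\).

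\emph{Main obstacle.} The only delicate point is the sign and exponent convention in the conjugation formula, which depends on whether the action is written as \(\sigma\tau\sigma^{-1}\) or \(\sigma^{-1}\tau\sigma\). I will fix it by the explicit computation above, checking it against the cyclotomic action on \(\mu_p\). I also need the eigen-relation \(\sigma_a(\alpha)\equiv\alpha^{\theta(a)}\); this follows from \(\sigma_a e_\theta=\theta(a)e_\theta\) in \(\F_p[\Gal(K_p/\Q)]\).
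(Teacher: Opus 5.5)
Your proposal is correct and follows the paper's route. The paper gives no separate proof of this corollary: it combines Corollary~\ref{cor:unramified}, Proposition~\ref{prop:certificates}, and the relation $\psi=\omega\theta^{-1}$ noted when $\theta$ and $\psi$ are introduced, and your computation $\sigma_a\tau\sigma_a^{-1}=\tau^{\omega(a)\theta(a)^{-1}}$ spells out the standard Kummer reflection step that the paper leaves implicit. One small point: $\zeta_p\in\Q(\zeta_{5p})$ holds for every $p$, so you do not need the congruence $p\equiv1\pmod{20}$ there.
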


\section{Completeness of the class components}

\begin{theorem}[Class-side completeness]
\label{thm:complete}
For every row of Theorem~\ref{thm:catalogue},
\[
 \length_{\Z_p}
 \bigl(\Cl(K_p)\otimes\Z_p\bigr)_\psi
 =v_p(B_{1,\psi^{-1}})=1.
\]
Therefore \(\alpha_{\chi,k}^{1/p}\) generates the complete
\(\psi\)-component of the Hilbert class field.
\end{theorem}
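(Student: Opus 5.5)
The proof combines three inputs: the characterwise abelian Main Conjecture, the exact valuation from Proposition~\ref{prop:digits}, and the nontrivial unramified extension from Corollary~\ref{cor:degree}.

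\emph{Step 1: identify the odd character.} The class character \(\psi=\bar\chi\omega^j\) is odd, since \(\chi\) is odd and \(j\) is even. It is a nontrivial character of \(\Gal(K_p/\Q)\) that is not equal to \(\omega\): its \(5\)-part \(\bar\chi\) is nontrivial.

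\emph{Step 2: apply the Main Conjecture.} For such odd \(\psi\), the Mazur--Wiles theorem (in Greither's formulation, valid at \(p\) odd and \(p\nmid[K_p:\Q]\) away from the \(p\)-part, with \(\Z_p[\psi]\)-coefficients) gives
\[
 \length\bigl(\Cl(K_p)\otimes\Z_p\bigr)_\psi=v_p\bigl(B_{1,\psi^{-1}}\bigr),
\]
where \(v_p\) is normalized at the prime of \(\Z_p[\psi]\) fixed by the chosen embedding. Here \(\psi^{-1}=\chi\omega^{-j}\), and \(\Z_p[\psi]=\Z_p\) because \(p\equiv1\pmod{20}\). So the length is an honest \(\Z_p\)-length, and the right side is exactly the quantity \(v_p(B_{1,\chi\omega^{-j}})\).

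One care point is the index convention. Some references state the Main Conjecture with the component indexed by \(\omega\psi^{-1}\) on the unit side, and with the Bernoulli number attached to \(\psi^{-1}\) on the class side. I will check that the component paired with \(B_{1,\psi^{-1}}\) is the \(\psi\)-component and not its reflection. This matches the statement of Corollary~\ref{cor:degree}, where the class character is \(\omega\theta^{-1}=\psi\).

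\emph{Step 3: insert the valuation.} Proposition~\ref{prop:digits} gives \(v_p(B_{1,\chi\omega^{-j}})=1\) for every row, so the \(\psi\)-component has length \(1\). It is therefore cyclic of order \(p\).

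\emph{Step 4: generation.} By Corollary~\ref{cor:degree}, \(K_p(\alpha_{\chi,k}^{1/p})\) is a nontrivial unramified cyclic degree-\(p\) extension. By Kummer/class field duality, its Galois group is a quotient of \(\Cl(K_p)\otimes\Z_p\) on which \(\Gal(K_p/\Q)\) acts through \(\omega\theta^{-1}=\psi\). Hence it is a nonzero quotient of the \(\psi\)-component. That component has order exactly \(p\), so the quotient map is an isomorphism, and the radical cuts out the full \(\psi\)-part of the Hilbert class field.

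The main obstacle is Step 2: quoting the Main Conjecture in a form with the correct character bookkeeping and normalization. This covers the Teichmüller twist, the distinction between \(\psi\) and \(\psi^{-1}\), and the fact that \(B_{1,\psi^{-1}}\) rather than \(\tfrac12 B_{1,\psi^{-1}}\) appears. The factor \(\tfrac12\) is harmless at odd \(p\). The nonprimitivity of \(\psi\) is not an issue, since its conductor is exactly \(5p\).
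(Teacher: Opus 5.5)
Your proposal is correct and follows the paper's own proof. The paper checks that \(\psi\) is odd, primitive of conductor \(5p\), and distinct from \(\omega\), and that \(p\nmid[K_p:\Q]=4(p-1)\). It then quotes the characterwise finite-level Main Conjecture to get length \(=v_p(B_{1,\psi^{-1}})\), uses Proposition~\ref{prop:digits} to make this equal to \(1\), and lets the nontrivial degree-\(p\) unramified \(\psi\)-quotient of Corollary~\ref{cor:degree} exhaust the component. Your two additions are sound refinements of that same argument rather than a different route: the observation that \(\Z_p[\psi]=\Z_p\), and the explicit Artin-map surjection in Step 4.
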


\begin{proof}
The character \(\psi=\bar\chi\omega^j\) is odd, primitive of
conductor \(5p\), and distinct from \(\omega\).  Moreover,
\[
p\nmid[K_p:\Q]=4(p-1),
\]
so the character decomposition is semisimple.  In this semi-simple
setting the algebraic and arithmetic isotypic components of the
class group coincide; the distinction, essential when \(p\) divides
the degree, is analyzed in \cite{GrasPhi}.

The finite-level
odd-character consequence of the abelian Main Conjecture
\cite{MazurWiles,Greither} gives the characterwise equality
\[
 \length_{\Z_p}
 \bigl(\Cl(K_p)\otimes\Z_p\bigr)_\psi
 =v_p(B_{1,\psi^{-1}}).
\]
This is a per-character statement; the analytic relative class
number formula alone controls only the product over the odd
characters.  Proposition~\ref{prop:digits} makes the valuation
one, while Corollary~\ref{cor:degree} supplies a nontrivial
unramified quotient of degree \(p\).  The quotient therefore
exhausts the component.
\end{proof}

The exhaustive enumeration follows from the short sum in
Lemma~\ref{lem:short-form}; Proposition~\ref{prop:digits} proves
simplicity; Proposition~\ref{prop:certificates} proves global
nontriviality; and Theorem~\ref{thm:complete} proves completeness.
This completes the proof of Theorem~\ref{thm:catalogue}.

\section{The first conjugate pair at p = 61}

At \(p=61\), both coefficient embeddings contribute, but on
different spectral lines:
\[
 (\chi_+,j,k)=(\chi_+,34,27),
 \qquad
 (\chi_-,j,k)=(\chi_-,10,51).
\]
For the canonical fifth root \(\xi=9\pmod{61}\), its lift modulo
\(61^2\) is \(3120\).  The two lifts of \(\chi(2)\) are
\[
 682,\qquad3039.
\]
The local digits are
\[
 S_{\chi_+,27}=61\cdot33,
 \qquad
 S_{\chi_-,51}=61\cdot12,
\]
and the class digits are
\[
 \frac{B_{1,\chi_+\omega^{-34}}}{61}\equiv51,
 \qquad
 \frac{B_{1,\chi_-\omega^{-10}}}{61}\equiv25
 \pmod{61}.
\]

At \(q=1831\), \(\Xi=64\) has exact order
\(305=5\cdot61\).  The two projected products are \(334\) and
\(1258\), with
\[
 334^{30}=\eta_q^{25}\neq1,
 \qquad
 1258^{30}=\eta_q^{13}\neq1.
\]
Thus \(K_{61}\) has two explicitly generated order-\(61\) class
components on distinct non-real character lines, although \(61\)
is classically regular.  This example answers a basic structural
question: conjugate characters need not vanish at conjugate copies
of the same index.

\section{Reproducibility}

The ancillary program
\[
 \texttt{anc/verify\_conductor5\_twisted\_class\_generators.py}
\]
uses only the Python standard library and exact integer arithmetic.
It checks:
\begin{itemize}
\item every prime \(p<500\) with \(p\equiv1\pmod{20}\);
\item both embeddings of the quartic character;
\item the exact eleven-line enumeration;
\item an independent evaluation through
      Lemma~\ref{lem:short-form};
\item the scalar \(\tau(\bar\chi)/5\) in
      Theorem~\ref{thm:residue-field};
\item all local and generalized Bernoulli second-order digits;
\item the ordinary irregular indices and the regular-row count;
\item primality and exact root order for every certificate; and
\item all eleven projected products and residue-symbol exponents.
\end{itemize}

The universal identity itself can be checked independently at a
chosen residue prime by evaluating both sides of
\eqref{eq:residue-field}; the proof of
Theorem~\ref{thm:char-zero}, however, is formal and does not depend
on the finite catalogue.

\section{Scope, relation to prior work, and further questions}

Generalized Bernoulli numbers, Gauss sums, Kummer congruences,
circular units, and character components of cyclotomic class
groups are classical.  The Bernoulli/class-group relation belongs
to the Stickelberger and Herbrand tradition
\cite{Gras1977,Washington}; the global unit and class-index setting
belongs to the circular-unit tradition
\cite{Sinnott,Greither}; and the characterwise completeness used
above is a finite-level consequence of the abelian Main Conjecture
\cite{MazurWiles,Greither}.

The contribution proposed here is the combined mechanism:
the universal local projector \(P_m\), its exactly normalized
character Fourier transform, the residue-field zero criterion,
and the conversion of that zero into a certificate-checkable
unramified radical.  The conductor-five theorem additionally shows
that this mechanism is not an artifact of quadratic reflection or
of a norm-one quotient, and that conjugate non-real characters can
select different indices.

The natural next problem has two parts.  Analytically, determine
the distribution and depth of the zeros \(b_{\chi,j}\) as \(f\),
\(\chi\), and \(p\) vary.  Arithmetically, determine hypotheses
under which the integral projection of \(1-\zeta_f\zeta_p\) gives
the full predicted class component for arbitrary conductor.  The
Fourier identity solves neither local descent at conductor primes
nor the reflected unit/class fork by itself; it gives a uniform
coordinate in which those questions can be asked and computed.
A systematic multi-conductor survey and its distributional
statistics will appear separately.

\medskip
\noindent
\textbf{Originality status.}
The exact projector normalization and its use as a uniform explicit
Kummer instrument appear not to be recorded in the sources checked
so far.  This paper does not claim a definitive priority result.
A specialist comparison with the literature on cyclotomic-unit
relations, explicit Jacobi-sum radicals, and computational class
groups of composite cyclotomic conductor remains appropriate before
submission.

\section*{Acknowledgements}

Computational exploration, drafting, and verification were assisted
by OpenAI's 5.6 Sol and Anthropic's Fable 5.  Responsibility for the mathematical statements
and final presentation remains with the author.

\end{document}